\documentclass[12pt,a4paper,oneside]{amsart}
\usepackage{color,amssymb,latexsym,amsfonts,textcomp,lastpage,fancyhdr,calc,graphicx}
\usepackage{amsmath,amstext,amsthm,amssymb,amsxtra}
\usepackage{txfonts}
\usepackage[left=2.5cm,right=2.5cm,bottom=2cm,top=2cm]{geometry} 

\newtheorem{theorem}{Theorem}
\newtheorem{corollary}[theorem]{Corollary}
\newtheorem{lemma}{Lemma}

\newtheorem{definition}{Definition}[section]
\theoremstyle{definition}

\newcommand{\beql}[1]{\begin{equation}\label{#1}}
\newcommand{\eeq}{\end{equation}}
\newcommand{\comment}[1]{}

\newcommand{\Abs}[1]{{\left|{#1}\right|}}

\newcommand{\Set}[1]{{\left\{{#1}\right\}}}

\newcommand{\RR}{{\mathbb R}}

\newcommand{\ZZ}{{\mathbb Z}}

\newcommand{\inner}[2]{{\langle #1, #2 \rangle}}

\newcommand{\supp}{{\rm supp\,}}

\newcommand{\ft}[1]{\widehat{#1}}

\newcounter{rem}
\setcounter{rem}{0}

\setlength{\marginparwidth}{1.7in}

\newcounter{step}
\setcounter{step}{0}


\newcounter{mysec}
\setcounter{mysec}{-1}

\newcounter{mysubsec}[mysec]
\setcounter{mysubsec}{-1}

\newcounter{othm}
\setcounter{othm}{0}
\def\theothm{\Alph{othm}}
\newenvironment{othm}{
  \em
  \vskip 0.10in
  \refstepcounter{othm}
  \noindent{\bf Theorem\ \theothm}
}


\begin{document}

\title{Periodicity of the spectrum in dimension one}

\author[A. Iosevich]{{Alex Iosevich}}
\address{A.I.: Department of Mathematics, 915 Hylan Building, University of Rochester,
Rochester, NY 14627, U.S.A.}
\email{iosevich@math.rochester.edu}
\thanks{A.I.: Supported by NSF grant DMS10-45404}

\author[M. Kolountzakis]{{Mihail N. Kolountzakis}}
\address{M.K.: Department of Mathematics, University of Crete, Knossos Ave., GR-714 09, Iraklio, Greece}
\email{kolount@math.uoc.gr}
\thanks{M.K.: Supported by research grant No 3223 from the Univ.\ of Crete and by
NSF grant DMS10-45404 and grants of the University of Rochester, whose hospitality is gratefully
acknowledged}

\date{\today}

\begin{abstract}
A bounded measurable set $\Omega$, of Lebesgue measure 1,
in the real line is called spectral if there is a set $\Lambda$ of real numbers (``frequencies'')
such that the exponential functions $e_\lambda(x) = \exp(2\pi i \lambda x)$, $\lambda\in\Lambda$, form
a complete orthonormal system of $L^2(\Omega)$. Such a set $\Lambda$ is called a {\em spectrum} of $\Omega$.
In this note we prove that any spectrum $\Lambda$ of a bounded measurable
set $\Omega\subseteq\RR$ must be periodic.
\end{abstract}

\maketitle

\noindent{\bf Keywords:} Spectral sets; Fuglede's Conjecture.

\ 

\noindent{\bf AMS Primary Classification:} 42B99

\tableofcontents 

\section{Tilings, spectral sets and periodicity}

\subsection{Spectra of domains in Euclidean space and the Fuglede Conjecture}
Let $\Omega \subseteq \RR^d$ be a bounded measurable set
and let us assume for simplicity that $\Omega$ has Lebesgue measure 1.
The concept of a spectrum of $\Omega$ that we deal with in this paper may be interpreted as
a way of using Fourier series for functions defined on $\Omega$ with non-standard frequencies.
It was introduced by Fuglede \cite{fuglede1974operators} 
who was studying a problem of Segal on the extendability of the partial differential operators
(on $C_c(\Omega)$)
$$
\frac{\partial}{\partial x_1}, \frac{\partial}{\partial x_2}, \ldots, \frac{\partial}{\partial x_d}
$$
to commuting operators on all of $L^2(\Omega)$.

\begin{definition}
A set $\Lambda \subseteq \RR^d$
is called a {\em spectrum} of $\Omega$ (and $\Omega$ is said to be a {\em spectral set})
if the set of exponentials
$$
E(\Lambda) = \Set{e_\lambda(x)=e^{2\pi i \lambda\cdot x}:\ \lambda\in\Lambda}
$$
is a complete orthonormal set in $L^2(\Omega)$.
\end{definition}
(The inner product in $L^2(\Omega)$ is $\inner{f}{g} = \int_\Omega f \overline{g}$.)

It is easy to see (see, for instance, \cite{kolountzakis2004milano}) that the orthogonality of $E(\Lambda)$
is equivalent to the {\em packing condition}
\beql{packing-condition}
\sum_{\lambda\in\Lambda}\Abs{\ft{\chi_\Omega}}^2(x-\lambda) \le 1,\ \ \mbox{a.e. ($x$)},
\eeq
as well as to the condition
\beql{zeros-condition}
\Lambda-\Lambda \subseteq \Set{0} \cup \Set{\ft{\chi_\Omega}=0}.
\eeq
The orthogonality and completeness of $E(\Lambda)$ is in turn equivalent to the {\em tiling condition}
\beql{tiling-condition}
\sum_{\lambda\in\Lambda}\Abs{\ft{\chi_\Omega}}^2(x-\lambda) = 1,\ \ \mbox{a.e. ($x$)}.
\eeq
These equivalent conditions follow from the identity
\beql{inner}
\inner{e_\lambda}{e_\mu} = \int_\Omega e_\lambda \overline{e_\mu} = \ft{\chi_\Omega}(\mu-\lambda)
\eeq
and from the completeness of all the exponentials in $L^2(\Omega)$.
Condition \eqref{packing-condition} is roughly expressing the validity of Bessel's inequality for the
system of exponentials $E(\Lambda)$ while condition \eqref{tiling-condition} says that Bessel's inequality
holds as equality.

If $\Lambda$ is a spectrum of $\Omega$ then so is any translate of $\Lambda$ but there may be other spectra as well.

{\em Example:} If $Q_d = (-1/2, 1/2)^d$ is the cube of
unit volume in $\RR^d$ then
$\ZZ^d$ is a spectrum of $Q_d$.
Let us remark here that
there are spectra of $Q_d$ which are very different from translates of the lattice $\ZZ^d$
\cite{iosevich1998spectral,lagarias2000orthonormal,kolountzakis2000packing}.

In the one dimensional case, which will concern us in this paper,
condition \eqref{zeros-condition}
implies that the set $\Lambda$ has gaps bounded below by a positive number, the smallest
positive zero of $\ft{\chi_\Omega}$.
(Note that, since $\Omega$ is a bounded set, the function $\ft{\chi_\Omega}(\xi)$ can be defined for
all complex $\xi$ and is an entire function. This guarantees that its zeros are a discrete set.)

\noindent{\bf The Fuglede or Spectral Set Conjecture.}
Research on spectral sets has been driven for many years by a conjecture of Fuglede
\cite{fuglede1974operators} which stated that a set $\Omega$ is spectral if and only
if it is a translational tile. A set $\Omega$ is a translational tile if
we can translate copies of $\Omega$ around and fill space without overlaps.
More precisely there exists a set $S \subseteq \RR^d$ such that
\beql{tiling}
\sum_{s\in S} \chi_\Omega(x-s) = 1,\ \ \mbox{a.e. ($x$)}.
\eeq

One can extend the definition of translational tiling to functions from sets.
\begin{definition}
We say that the nonnegative
function $f:\RR^d\to\RR$ tiles by translation with the set $S \subseteq \RR^d$
if
$$
\sum_{s\in S} f(x-s) = \ell,\ \mbox{for almost every $x\in\RR^d$},
$$
where $\ell$ is a constant (the {\em level} of the tiling).
\end{definition}

Thus the question of spectrality for a set $\Omega$ is essentially a tiling question
for the function $\Abs{\ft{\chi_\Omega}}^2$ (the {\em power-spectrum}).
Taking into account the equivalent condition \eqref{tiling-condition}
one can now, more elegantly, restate the Fuglede Conjecture as the equivalence
\beql{fuglede-conjecture}
\chi_\Omega \mbox{ tiles $\RR^d$ by translation at level 1} \Longleftrightarrow
\Abs{\ft{\chi_\Omega}}^2 \mbox{ tiles $\RR^d$ by translation at level 1}.
\eeq
In this form the conjectured equivalence is perhaps more justified.
However this conjecture is now known to be false in both directions if $d\ge 3$
\cite{tao2004fuglede,matolcsi2005fuglede4dim,kolountzakis2006hadamard,kolountzakis2006tiles,farkas2006onfuglede,farkas2006tiles},
but remains open in dimensions $1$ and $2$ and it is not out of the question
that the conjecture is true if one restricts the domain $\Omega$ to be convex. (It is known
that the direction ``tiling $\Rightarrow$ spectrality'' is true in the case of convex domains;
see for instance \cite{kolountzakis2004milano}.)
The equivalence \eqref{fuglede-conjecture} is also known, from the time of Fuglede's
paper \cite{fuglede1974operators}, to be true if one adds the word {\em lattice} to both sides
(that is, lattice tiles are the same as sets with a lattice spectrum).

\subsection{Periodicity of spectra and tilings}

The property of periodicity is a very important property for a tiling.
\begin{definition}
A set $S \subseteq \RR^d$ is called (fully) periodic
if there exists a lattice $L \subseteq \RR^d$ (a discrete subgroup of $\RR^d$ with $d$
linearly independent generators; the period lattice) such that
$S+t = S$ for all $t \in L$.
We call a translation tiling periodic if the set of translations is periodic.
\end{definition}
As an example of the importance of periodicity for a tiling we mention its connection
to decidability \cite{robinson1971undecidability}, a question to which the study of tilings
has provided several examples and problems.
Although the general problem of tiling (not restricting the motions to be translations) is
undecidable, it is not hard to see that, when the assumption of periodicity is added,
the problem becomes decidable. 
Roughly, if one knows a priori that a set $\Omega$ admits periodic tilings, if it admits any,
then the question ``Does $\Omega$ admit a tiling?'' can be answered algorithmically by
simultaneously enumerating all possible counterexamples to tiling (if a tiling does not
exist then the obstacle will show up at some finite stage) as well as all possible tilings of finite
regions.
If a tiling does not exist then the first enumeration will produce a counterexample.
Otherwise, if a tiling exists then a periodic tiling exists and
one of the finite regions that can be tiled will show this
periodicity and can therefore be extended to all space. More details of this argument
can be found at \cite{robinson1971undecidability}.

The so-called {\em Periodic Tiling Conjecture} \cite{grunbaum1986tilings,lagarias1997spectral}
should be mentioned: if a set $\Omega$ tiles $\RR^d$ by translations (at level 1) then it can
also tile $\RR^d$ by a periodic set of translations.
By the argument sketched above this conjecture implies decidability for whatever class of sets
$\Omega$ it holds true.
For instance, when one considers finite discrete sets $\Omega \subseteq \ZZ^d$ (tiling is defined
analogously to the continuous case) then one could prove the decidability of the tiling
question if one managed to first prove that any such set $\Omega$ that tiles $\ZZ^d$ by translation
can also tile by a periodic set of translations.
Both questions are open for $d\ge 2$
(but see \cite{szegedy1998algorithms,wijshoff1984arbitrary} for some special cases).
For $d=1$ all translational tilings by finite subsets of $\ZZ$ are necessarily periodic
\cite{newman1977tesselations} and the problem is decidable.
Another class of tilings where the Periodic Tiling Conjecture holds is the case when $\Omega$ is assumed
to be a convex polytope in $\RR^d$, for any $d$ \cite{venkov1954class,mcmullen1980convex}.

In dimension $d=1$ it is known
\cite{leptin1991uniform,lagarias1996tiling,kolountzakis1996structure}
that all translational tilings by a bounded measurable set
are necessarily periodic.
More generally it is known that whenever $f \ge 0$ is an integrable function on the real line
which tiles the real line by translation with a set of translates $S$ then
$S$ is of the form
\beql{quasi-periodic}
S = \bigcup_{j=1}^J (\alpha_j \ZZ + \beta_j),
\eeq
where the real numbers $\alpha_j$ are necessarily commensurable (and $S$ is in that case
periodic) if the tiling is indecomposable (cannot be made up by superimposing other tilings).
But this result is not applicable to the periodicity of spectra as the power-spectrum
$\Abs{\ft{\chi_\Omega}}^2$ is never of compact support when $\Omega$ is bounded
(a qualitative expression of the {\em uncertainty principle}).

The question of periodicity of one-dimensional spectra was explicitly raised in \cite{laba2002spectral}.
It was recently proved (first in \cite{bose2010spectrum} and then a simplified
proof was given in \cite{kolountzakis2011periodic}) that if $\Omega$
is a finite union of intervals in the real line then any spectrum of $\Omega$ is periodic.
See also \cite{lagarias1997spectral} where periodicity of spectra and of tilings plays
an important role.

\begin{othm}\label{th:bose-madan}
{\rm [Bose and Madan \cite{bose2010spectrum}; Kolountzakis \cite{kolountzakis2011periodic}]}\\
If $\Omega = \bigcup_{j=1}^n (a_j, b_j) \subseteq \RR$ is a finite union of intervals of total
length $1$ and $\Lambda \subseteq \RR$ is a spectrum of $\Omega$ then there exists a positive integer
$T$ such that $\Lambda+T=\Lambda$.
\end{othm}

\ 

Our purpose in this note is to improve Theorem \ref{th:bose-madan}
by removing the assumption that $\Omega$ is a finite union of intervals.
\begin{theorem}\label{th:main}
Suppose that $\Lambda$ is a spectrum of $\Omega \subseteq \RR$, a bounded measurable set of
measure 1. Then $\Lambda$ is periodic and any period is a positive integer.
\end{theorem}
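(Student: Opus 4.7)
The strategy is to pass to the Fourier dual of the tiling identity \eqref{tiling-condition}. Writing the counting distribution $\mu_\Lambda = \sum_{\lambda\in\Lambda}\delta_\lambda$, the identity $\Abs{\ft{\chi_\Omega}}^2 * \mu_\Lambda = 1$ is equivalent, after taking Fourier transform, to
\[
h \cdot \ft{\mu_\Lambda} = \delta_0, \qquad h := \chi_\Omega * \chi_{-\Omega},
\]
where $h$ is a continuous function supported in the bounded set $\Omega - \Omega$ with $h(0) = \Abs{\Omega} = 1$. Although the power spectrum on the original side is not compactly supported (which is precisely why \eqref{quasi-periodic} does not directly apply to give periodicity of $\Lambda$), its Fourier transform $h$ is; this is the key structural feature the proof will exploit.

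The first step is to read off the local structure of $\ft{\mu_\Lambda}$. Since $h$ is continuous and strictly positive on some open neighborhood $V$ of $0$, multiplication by $h$ is distributionally invertible on $V$, so $\ft{\mu_\Lambda} = \delta_0$ on $V$. In particular, the tempered distribution $\ft{\mu_\Lambda} - \delta_0$ is supported on the closed set $\Set{h = 0}$, which contains the unbounded complement of $\Omega - \Omega$ together with whatever interior zeros $h$ possesses.

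The second and principal step is to promote this local information to a pure-point structure: one must show that $\ft{\mu_\Lambda}$ is a locally finite measure whose support is itself uniformly discrete, so that the pair $(\mu_\Lambda, \ft{\mu_\Lambda})$ becomes a Fourier quasi-crystal. Uniform discreteness of $\Lambda$, which follows from \eqref{zeros-condition} together with the fact that $\ft{\chi_\Omega}$ is an entire function of exponential type, makes $\mu_\Lambda$ a translation-bounded positive measure, and integrating the tiling identity against an approximate identity forces $\dens(\Lambda) = 1$. Combining these with the compact support of $h$, and probably testing the Fourier identity against suitable smooth translates of $h$ in order to transfer information from $\supp(h)$ to its complement, one aims to rigidify $\ft{\mu_\Lambda}$ globally and eliminate continuous or singular components away from the origin.

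Once this is in hand, a structural classification of one-dimensional crystalline measures (in the spirit of Meyer and Lev--Olevskii) applies: a positive translation-bounded measure on $\RR$ whose Fourier transform is also a measure with uniformly discrete support must be periodic, and in fact a finite superposition of Dirac combs on commensurable arithmetic progressions. This yields $\Lambda + T = \Lambda$ for some $T > 0$; combining with $\dens(\Lambda) = 1$ (so that a period of length $T$ contains exactly $T$ points of $\Lambda$) forces $T$ to be a positive integer, as asserted. The main obstacle is the second step: in the setting of Theorem \ref{th:bose-madan} the piecewise-linear structure of $h$ permits essentially explicit inversion of the multiplication equation, but for general bounded measurable $\Omega$ the zero set of $h$ inside $\Omega - \Omega$ can be very complicated, so softer, more robust Fourier-analytic arguments will be needed to extract the pure-point structure of $\ft{\mu_\Lambda}$ everywhere.
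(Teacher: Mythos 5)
Your reduction to the Fourier-side identity $h \cdot \ft{\mu_\Lambda} = \delta_0$ with $h = \chi_\Omega * \chi_{-\Omega}$ is the right starting point, and the local conclusion $\ft{\mu_\Lambda} = \delta_0$ near the origin (a spectral gap for $\delta_\Lambda$) is exactly the piece of information the paper also extracts. But your second step --- upgrading this to the statement that $\ft{\mu_\Lambda}$ is a measure with uniformly discrete support, so that Meyer/Lev--Olevskii quasicrystal rigidity applies --- is not an ``obstacle to be handled by softer arguments''; it is the entire difficulty, and the identity you start from cannot deliver it. Since $h$ is supported in the bounded set $\overline{\Omega - \Omega}$, the set $\Set{h = 0}$ contains the whole unbounded complement of that set, so the equation $h\cdot\ft{\mu_\Lambda}=\delta_0$ places \emph{no constraint whatsoever} on $\ft{\mu_\Lambda}$ outside a compact neighborhood of the origin: there is no mechanism here to show $\supp\ft{\mu_\Lambda}$ is discrete, or even that $\ft{\mu_\Lambda}$ is locally a measure, away from $\supp h$. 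This is precisely the failure mode the paper points out when explaining why the structure theorem \eqref{quasi-periodic} for tilings by compactly supported functions does not apply: in the tiling-by-$\chi_\Omega$ setting the relevant multiplier is the entire function $\ft{\chi_\Omega}$ with discrete zero set, whereas here the multiplier $h$ is compactly supported and its zero set is essentially all of $\RR$. As written, your argument proves only the spectral gap and then defers the theorem to an unproved (and, by this route, unprovable) global pure-point structure.

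The paper's proof circumvents this entirely: it never establishes any global structure for $\ft{\mu_\Lambda}$ and uses \emph{only} the spectral gap $(0,a)$ near the origin. The two additional ingredients are (i) the F.~and M.~Riesz theorem: if two spectra $\Lambda^1,\Lambda^2$ (with gaps bounded above and below, hence encodable as sequences over a finite alphabet $\Sigma$ of successive differences) agree on a left half-line, then $\mu = \delta_{\Lambda^1}-\delta_{\Lambda^2}$ is supported on a half-line and, after multiplying by $\ft{\psi}$ for $\psi\in C^\infty(-a/10,a/10)$, becomes a totally bounded measure on a half-line whose Fourier transform vanishes on an interval --- forcing $\mu\equiv 0$; and (ii) a compactness argument in $\Sigma^\ZZ$ showing that a closed shift-invariant set determined by half-lines is determined by finite windows, whence every element is periodic by pigeonhole. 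If you want to salvage your outline, you would need to replace your step two and three by some argument of this type; the quasicrystal structure theorems you invoke require hypotheses on $\ft{\mu_\Lambda}$ that the problem simply does not supply. Your final observation that density $1$ forces any period $T$ to be a positive integer is correct and matches the paper.
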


The proof of Theorem \ref{th:main} is given in \S\ref{sec:proof}.

\begin{corollary}\label{cor:multiple-tiling}
If $\Omega$, a bounded measurable set of
measure 1, is spectral then $\Omega$ tiles the real line at some integer
level $T$ when translated at the locations $T^{-1} \ZZ$.
\end{corollary}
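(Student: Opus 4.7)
The plan is to read off the conclusion directly from Theorem \ref{th:main} together with the orthogonality condition \eqref{zeros-condition} and a short Poisson summation / periodization argument.

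By Theorem \ref{th:main}, the spectrum $\Lambda$ satisfies $\Lambda + T = \Lambda$ for some positive integer $T$. Fixing any $\lambda_0\in\Lambda$, this gives $\lambda_0+kT\in\Lambda$ for every $k\in\ZZ$, hence $kT\in\Lambda-\Lambda$ for every $k\in\ZZ$. Applying \eqref{zeros-condition} to $k\ne 0$ we conclude that
$$
\ft{\chi_\Omega}(kT) = 0 \quad \text{for every nonzero integer } k.
$$

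Next I would periodize $\chi_\Omega$ at scale $1/T$: set
$$
g(x) = \sum_{k\in\ZZ}\chi_\Omega\pp{x - k/T}.
$$
Since $\Omega$ is bounded, only finitely many terms of this sum are nonzero for each $x$, so $g$ is well-defined a.e., is $(1/T)$-periodic, and lies in $L^1$ of the torus $\RR/(T^{-1}\ZZ)$ (its integral over one period equals $\Lone{\chi_\Omega}=1$). The Fourier coefficients of $g$ with respect to the orthonormal system $\{e^{2\pi i n T x}\}_{n\in\ZZ}$ on this torus are
$$
\widehat g(n) \;=\; T\int_0^{1/T} g(x)\, e^{-2\pi i n T x}\,dx \;=\; T\int_{\RR}\chi_\Omega(x)\, e^{-2\pi i n T x}\,dx \;=\; T\,\ft{\chi_\Omega}(nT).
$$
From the previous paragraph $\widehat g(n)=0$ for every $n\ne 0$, while $\widehat g(0) = T\,\ft{\chi_\Omega}(0) = T|\Omega| = T$. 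By uniqueness of Fourier coefficients of an $L^1$ function on the torus, $g(x)=T$ a.e., which is precisely the statement that $\chi_\Omega$ tiles $\RR$ at level $T$ when translated along $T^{-1}\ZZ$.

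There is no real obstacle here: everything is a routine consequence of Theorem \ref{th:main} and the orthogonality identity \eqref{zeros-condition}. The only point worth double-checking is that $g\in L^1$ of the torus, which is why the boundedness of $\Omega$ is used; after that, the Fourier-coefficient computation and uniqueness finish the argument. In particular, $T$ is an integer by Theorem \ref{th:main}, so the tiling level $T$ is indeed an integer, as claimed.
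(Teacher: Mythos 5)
Your proof is correct and follows essentially the same route as the paper: invoke Theorem \ref{th:main} to get an integer period $T$, deduce $\ft{\chi_\Omega}(nT)=0$ for all nonzero $n\in\ZZ$ from \eqref{zeros-condition}, and conclude the tiling at level $T$. The only difference is that the paper delegates the final step to a citation, whereas you carry out the standard periodization and Fourier-coefficient uniqueness argument explicitly, which is exactly the content of that cited fact.
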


\begin{proof}
Let $\Lambda$ is a spectrum of $\Omega$.
By Theorem \ref{th:main} we know that $\Lambda$ is a periodic set and let $T$ be one of its periods:
$\Lambda+T=\Lambda$.
Then we have $\Lambda = T\ZZ + \Set{\ell_1,\ldots,\ell_T}$
(the number of elements in each period must be $T$ in order for $\Lambda$ to have
density 1, hence $T$ is an integer), and, by \eqref{zeros-condition},
this implies that $\ft{\chi_\Omega}(nT) = 0$ for all nonzero $n \in \ZZ$.
Hence $\Omega$ tiles $\RR$ when translated at $T^{-1}\ZZ$ (see, e.g.\ \cite{kolountzakis2004milano})
at level $T$.
\end{proof}

Theorem \ref{th:main} is not true in dimension higher than 1. For instance,
even when $\Omega$ is as simple as a cube, it may have spectra that are not periodic
\cite{lagarias2000orthonormal,iosevich1998spectral,kolountzakis2000packing}.

\section{Proof of periodicity for spectra in dimension 1}
\label{sec:proof}

\subsection{The spectrum as a double sequence of symbols}
\label{sec:symbols}

Because of \eqref{zeros-condition} we have that the gap between any two elements
of $\Lambda$ is bounded below by $\delta>0$, the smallest positive zero of $\ft{\chi_\Omega}$.
Let us now observe
that the gap between successive elements of $\Lambda$ is also bounded above
by a constant that depends only on $\Omega$.

\begin{lemma}\label{lm:bounded-gaps}
If $\Omega\subseteq\RR$ is a bounded measurable set of measure 1
then there is a finite number $\Delta>0$
such that if $\Lambda$ is any spectrum of $\Omega$
then the gap between any two successive
elements of $\Lambda$ is at most $\Delta$.
\end{lemma}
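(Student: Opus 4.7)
The plan is to integrate the tiling identity \eqref{tiling-condition} over a hypothetical long gap in $\Lambda$ and derive a contradiction from dominated convergence. The only two inputs needed are that $f(y) := |\ft{\chi_\Omega}(y)|^2$ lies in $L^1(\RR)$ (by Plancherel, $\int f = |\Omega| = 1$) and that $\Lambda$ is $\delta$-separated with $\delta > 0$ the smallest positive zero of $\ft{\chi_\Omega}$ (a consequence of \eqref{zeros-condition}); the latter supplies the uniform counting bound $|\Lambda \cap I| \leq |I|/\delta + 1$ for any interval $I$.

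Concretely, suppose $(a,b) \subseteq \RR \setminus \Lambda$ and set $L := b-a$. First I would integrate \eqref{tiling-condition} over $(a,b)$ and swap sum with integral (legitimate since $f \geq 0$) to rewrite
$$
L \;=\; \int_{\RR} f(y)\, \bigl|\Lambda \cap [a-y, b-y]\bigr|\, dy.
$$
Next I would exploit the gap: the shifted interval $[a-y,b-y]$ has length $L$ and overlaps $(a,b)$ in a subinterval of length $L - \min(|y|, L)$, so $[a-y,b-y] \setminus (a,b)$ has Lebesgue measure $\min(|y|, L)$. Since $\Lambda$ is disjoint from $(a,b)$ and $\delta$-separated, this yields
$$
\bigl|\Lambda \cap [a-y, b-y]\bigr| \;\leq\; \delta^{-1}\min(|y|, L) + 2.
$$
Substituting produces
$$
L \;\leq\; \delta^{-1} \int_{\RR} f(y)\, \min(|y|, L)\, dy \;+\; 2.
$$

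The decisive step is to recognize that the right-hand side is $o(L)$ as $L \to \infty$. Dividing the main term by $L$ turns the integrand into $f(y)\,\min(|y|/L, 1)$, which is pointwise $\to 0$ and dominated by $f \in L^1$; dominated convergence then gives $L^{-1}\int f(y)\min(|y|, L)\,dy \to 0$. Hence $L \leq \delta^{-1} o(L) + 2$, which forces $L$ to be bounded by some $\Delta = \Delta(\Omega)$ depending only on $\Omega$, proving the lemma.

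I do not anticipate a serious obstacle: the argument uses no decay rate or smoothness of $\ft{\chi_\Omega}$ beyond $L^1$ integrability of its square, and the bound $\min(|y|, L)$ is the natural geometric consequence of $\Lambda$ avoiding a gap of length $L$. The one place a subtler argument might be needed is if one wanted an explicit quantitative bound on $\Delta$ in terms of, say, $\diam\Omega$; the soft dominated convergence argument does not provide this.
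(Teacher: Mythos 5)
Your proof is correct, and it takes a different route from the paper: the paper disposes of this lemma by citing Lemma~2.3 of \cite{kolountzakis1996structure} (uniform asymptotic density of translation sets in a tiling by a nonnegative $L^1$ function), together with a footnote explaining why the relevant inequality there is uniform over the location of the interval --- a point delicate enough that the authors' first version of the argument contained an error. Your argument is instead fully self-contained: you integrate the tiling identity \eqref{tiling-condition} over a putative gap $(a,b)$ of length $L$, rewrite the result via Tonelli as $L=\int f(y)\,\Abs{\Lambda\cap[a-y,b-y]}\,dy$ with $f=\Abs{\ft{\chi_\Omega}}^2$, and use the two genuinely needed inputs ($\int f=1$ by Plancherel, and $\delta$-separation of $\Lambda$ from \eqref{zeros-condition}) to bound the counting factor by $\delta^{-1}\min(\Abs{y},L)+2$; dominated convergence then shows the right-hand side is $o(L)+2$, forcing $L\le\Delta(\Omega)$. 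I checked the details: the measure of $[a-y,b-y]\setminus(a,b)$ is indeed $\min(\Abs{y},L)$, the separation bound for a set avoiding $(a,b)$ is correct, and the dominated convergence step is genuinely needed (one cannot simply use $\min(\Abs{y},L)\le\Abs{y}$, since $f$ need not have a finite first moment --- e.g.\ for an interval $f(y)=(\sin\pi y)^2/(\pi y)^2$). Crucially, your bound depends only on $\delta$ and $f$, i.e.\ only on $\Omega$, so the uniformity over all spectra $\Lambda$ --- the point the paper's footnote labors over --- is manifest. What the paper's approach buys is brevity and a pointer to a more general density statement; what yours buys is a transparent, quantifiable, and self-contained argument that realizes exactly the alternative sketched in the paper's footnote (``one can prove that $N_A(T)$ cannot be $0$ \dots without taking the limit'').
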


\begin{proof}
Lemma \ref{lm:bounded-gaps} is essentially a special case of Lemma 2.3 of 
\cite{kolountzakis1996structure}.
In that Lemma it is proved that if $0 \le f \in L^1(\RR)$ tiles the line with a set $A$,
$$
\sum_{a \in A} f(x-a) = w,\ \ \mbox{for almost all $x\in\RR$, with $w>0$ a constant},
$$
then the set $A$ has asymptotic density equal to $\rho = \frac{w}{\int f}$.
This means that the ratio
$$
\frac{\Abs{A \cap I}}{\Abs{I}}
$$
tends to $\rho$ as the length of the interval $I$ tends to infinity.
The convergence is {\em uniform} over the choice of the set $A$ and the location of the interval $I$.
\footnote{
Inequality (2.4) speaks of $N_A(T) = \Abs{A \cap [-T,T]}$,
but none of the other quantities that appear in it depend on $A$.
This means that inequality (2.4) holds even if we take $N_A(T)$ to be the number of elements of $A$ in
{\em any} interval of length $2T$.
In fact, one can prove that $N_A(T)$ cannot be $0$ if $T$ is sufficiently large, depending on $\Omega$,
without taking the limit in inequality (2.4) and without talking about asymptotic density.
}
This uniformity of course implies that the maximum gap of $A$ is bounded by a quantity that depends
on $f$ only.

Since $\sum_{\lambda\in\Lambda}\Abs{\chi_\Omega}^2(x-\lambda)=1$ is a tiling and
$0 \le \Abs{\chi_\Omega}^2 \in L^1(\RR)$ we deduce that $\Lambda$ has gaps bounded above
by a function of $\Omega$ alone.
\end{proof}

Let now
$$
Z = \Set{\xi\in\RR:\ \ft{\chi_\Omega}(\xi) = 0}
$$
and define the finite set (as $Z$ is discrete)
\beql{sigma}
\Sigma = Z \cap (0,\Delta] = \Set{s_1, s_2, \ldots, s_k},
\eeq
where $\Delta$ is the quantity given by Corollary \ref{lm:bounded-gaps}.

We now view the set $\Sigma$ as a finite set of symbols (alphabet) and consider
the set $\Sigma^\ZZ$ of all bidirectional sequences of elements of $\Sigma$ equipped
with the product topology. A sequence $x^n$ of elements of $\Sigma^\ZZ$ converges to $x \in \Sigma^\ZZ$
if for all $k=1,2,\ldots$ the double sequences $x^n$ and $x$ agree in the window $[-k, k]$ for large enough
$n$.
More precisely, for all $k=1,2,\ldots$ there is $n_0$ such that for $n\ge n_0$
we have
$$
x^n_j = x_j,\ \ \mbox{for $-k \le j \le k$}.
$$
$\Sigma^\ZZ$ is a metrizable compact space so that each sequence $x^n \in \Sigma^\ZZ$ has a convergent
subsequence. This is just another way of phrasing a diagonal argument that is somewhat more
convenient to use. The proof below may of course be phrased avoiding topological notions altogether
and replacing the convergence of each subsequence with a diagonal argument.

The space $\Sigma^\ZZ$ is the natural space in which to view a spectrum $\Lambda$ of $\Omega$,
as the set $\Lambda$ is locally of {\em finite complexity}: because of 
\eqref{zeros-condition} the difference of any two successive elements of $\Lambda$ can
be only be an element of $\Sigma$.
By demanding, as we may, that $0$ is always in $\Lambda$ we can therefore represent
any set $\Lambda$ with the sequence of its successive differences.
More precisely, we map any set $\Lambda \subseteq \RR$ whose successive differences are in $\Sigma$
and which contains $0$
$$
\Lambda = \Set{\cdots < -\lambda_2 < -\lambda_{-1} < \lambda_0 = 0 < \lambda_1 < \lambda_2 < \cdots}
$$
to the element $\Lambda \in \Sigma^\ZZ$ given by
$$
\Lambda_n = \lambda_{n+1}-\lambda_n,\ \ \ (n \in \ZZ).
$$
This correspondence is a bijection and we will use one or the other form of the set $\Lambda$ as it
suits us.

\subsection{Symbolic sequences determined by their values in a half-line}
\label{sec:windows}

Suppose $X \subseteq \Sigma^\ZZ$. We say that $X$ is determined by left half-lines if knowing
an element of $X$ to the left of any index $n$ suffices to determine the element in the remaining
positions to the right of $n$, i.e.\ if for any $x, y \in X$ and $n \in \ZZ$ we have
$$
(x_i = y_i \mbox{ for $i\le n$}) \Longrightarrow (x_i = y_i \mbox{ for all $i \in \ZZ$}).
$$
Determination of $X$ by right half-lines is defined analogously.

We similarly say that $X$ is determined by any window of size $w$ (a positive integer)
if for any $x \in X$ and any $n\in\ZZ$
knowing $x_i$ for $i=n, n+1, \ldots, n+w-1$ completely determines $x$.

\begin{theorem}\label{th:diag}
Suppose $X \subseteq \Sigma^\ZZ$ is a closed, shift-invariant set which is determined by left half-lines
and by right half-lines.
Then there is a finite number $w$ such that $X$ is determined by windows of size $w$.
\end{theorem}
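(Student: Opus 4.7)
The plan is to convert each half-line determination hypothesis into a \emph{uniform} finite-window determination by a compactness argument, and then glue the two halves together. By shift-invariance of $X$ it suffices to find non-negative integers $N$ and $M$ such that every $x\in X$ is uniquely determined by its values on the window $[-N, M]$; one may then take $w = N + M + 1$.

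I would produce $N$ from the left-half-line hypothesis. Consider the restriction map $\pi\colon X \to \Sigma^{(-\infty,0]}$ sending $x$ to $x|_{(-\infty,0]}$. It is continuous and, by hypothesis, injective. Since $X$ is closed in the compact space $\Sigma^\ZZ$, it is compact, so $\pi$ is a homeomorphism onto its (compact) image $Y := \pi(X)$. Consequently the map $\phi\colon Y \to \Sigma$ defined by $\phi(u) = (\pi^{-1}(u))_1$ is continuous, where $\Sigma$ carries the discrete topology. Each fiber $\phi^{-1}(s)$, $s\in\Sigma$, is therefore clopen in $Y$; as an open subset of $Y$ it is a union of basic cylinder sets, and as a compact set it is a \emph{finite} union of such cylinders. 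Only finitely many coordinates appear in all the cylinders defining all the (finitely many) fibers, so there exists an integer $N\ge 0$ such that $\phi(u)$ depends only on $u_{-N}, \ldots, u_0$. Translating back to $X$: for every $x \in X$ the symbol $x_1$ is a function of the finite block $(x_{-N}, \ldots, x_0)$. Iterating this rule and using shift-invariance to apply it at every position, the block $(x_{-N}, \ldots, x_0)$ determines $x_k$ for every $k \ge -N$, i.e.\ the entire right tail of $x$.

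The entirely symmetric argument applied to $\pi_R\colon X \to \Sigma^{[0,\infty)}$, using the right-half-line hypothesis, yields an integer $M \ge 0$ such that the block $(x_0, \ldots, x_M)$ determines $x_k$ for every $k \le M$. Combining the two conclusions, the window $(x_{-N}, \ldots, x_M)$ determines every $x \in X$ completely, so $w = N + M + 1$ works. The main (and essentially only) obstacle is passing from the a priori abstract injectivity of $\pi$ to a \emph{uniform} finite window valid at every point; this is handled decisively by compactness of $Y$ together with the fact that $\Sigma^{(-\infty,0]}$ has a basis of clopen sets, so that any continuous map into a finite discrete set is locally constant with a single finite window that works at every point. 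Everything else reduces to bookkeeping and the use of shift-invariance to propagate a local rule to the whole sequence.
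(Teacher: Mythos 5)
Your proof is correct, and it differs from the paper's in packaging though not in its essential engine: in both arguments it is the compactness of the closed set $X\subseteq\Sigma^\ZZ$ that upgrades the pointwise statement ``a half-line determines the sequence'' to a uniform finite-window statement. The paper argues by contradiction with sequential compactness: it reduces the claim to showing that some window of size $w$ determines the symbol immediately to its right, supposes not, produces pairs $x^n,y^n$ agreeing on windows of width $n$ positioned so that the disagreement sits at index $0$, extracts convergent subsequences, and contradicts left half-line determination; the right half-line hypothesis enters only at the very end, to pass from agreement on a right half-line to equality. Your argument is direct: left half-line determination makes the restriction $\pi\colon X\to\Sigma^{(-\infty,0]}$ a continuous injection from a compact space, hence a homeomorphism onto its image, so the ``next symbol'' map $u\mapsto(\pi^{-1}(u))_1$ is a continuous map into a finite discrete set whose fibers, being compact and relatively open, are finite unions of (relative) cylinders; it therefore depends on only finitely many coordinates, i.e.\ it is a sliding block code, which you then propagate by shift-invariance. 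You pay for the symmetry of your setup by invoking compactness twice, once per direction, where the paper uses it only against the left half-line hypothesis; in exchange you obtain an explicit mechanism (a finite transition rule in each direction) rather than a pure existence statement obtained by contradiction. Both proofs are complete.
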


\begin{proof}
It is enough to show that there is a finite window size $w$ such that whenever two elements of $X$
agree on a window of size $w$ then they necessarily agree at the first index to the right of that window.
For in that case they necessarily agree at the entire right half-line to the right of the window and are
by assumption equal elements of $X$.

Assume this is not true. Then there are elements $x^n, y^n$ of $X$, $n=1,2,\ldots$,
which agree at some window of width $n$
but disagree at the first location to the right of that window. Using the shift invariance of $X$
we may assume that
$$
x^n_{-n}=y^n_{-n},\ x^n_{-n+1} = y^n_{-n+1},\ \ldots,\ x^n_{-1} = y^n_{-1}\ \ \&\ \ x^n_0 \neq y^n_0.
$$

By the compactness of the space there are $x, y \in X$ and a subsequence of $n$'s such that
$x^n \to x$ and $y^n \to y$.
By the meaning of convergence in the space $\Sigma^\ZZ$ we have that the sequences $x$ and $y$
agree for all negative indices and disagree at $0$.
This contradicts the assumption that $X$ is determined by left half-lines.
\end{proof}

\begin{theorem}\label{th:periodic-elements}
If $X \subseteq \Sigma^\ZZ$ is shift-invariant and is determined by windows of size $w$ then
all elements of $X$ are periodic, and the period can be chosen to be at most $\Abs{\Sigma}^w$.
\end{theorem}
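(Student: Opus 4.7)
The plan is a direct pigeonhole argument on the window contents of a single element $x \in X$. Fix $x \in X$ and for each $n \in \ZZ$ let $W_n(x) = (x_n, x_{n+1}, \ldots, x_{n+w-1}) \in \Sigma^w$ be the window of length $w$ starting at position $n$. There are only $\Abs{\Sigma}^w$ possible window contents, so among the $\Abs{\Sigma}^w+1$ windows $W_0(x), W_1(x), \ldots, W_{\Abs{\Sigma}^w}(x)$ two must coincide: there exist $0 \le m < n \le \Abs{\Sigma}^w$ with $W_m(x) = W_n(x)$. Set $T = n-m$, so $1 \le T \le \Abs{\Sigma}^w$.

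Next I would use shift invariance to bring $T$-periodicity of $x$ into contact with the window-determination hypothesis. Let $\sigma$ denote the left shift on $\Sigma^\ZZ$, so $(\sigma^T x)_j = x_{j+T}$. Since $X$ is shift-invariant, $\sigma^T x \in X$. Now compute the window of $\sigma^T x$ at position $m$:
\[
W_m(\sigma^T x) = \bigl((\sigma^T x)_m, \ldots, (\sigma^T x)_{m+w-1}\bigr) = (x_{m+T}, \ldots, x_{m+T+w-1}) = W_n(x) = W_m(x).
\]
Thus $x$ and $\sigma^T x$ are two elements of $X$ which agree on a window of size $w$ (namely the one at position $m$). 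By the hypothesis that $X$ is determined by windows of size $w$, this forces $\sigma^T x = x$, i.e.\ $x_{j+T} = x_j$ for every $j \in \ZZ$. Hence $x$ is periodic with period $T \le \Abs{\Sigma}^w$, as claimed.

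There is essentially no obstacle here; the only thing to be careful about is making sure the shift-invariance of $X$ and the window-determination property are combined in the right order, and that the pigeonhole gives a period bounded by $\Abs{\Sigma}^w$ rather than something larger. Note that the argument does not require $X$ to be closed (unlike Theorem \ref{th:diag}), nor any compactness: finiteness of the alphabet $\Sigma$ and the assumed window length $w$ suffice. The conclusion applies uniformly to every $x \in X$, with a common period bound $\Abs{\Sigma}^w$ (though the actual period $T$ may of course depend on $x$).
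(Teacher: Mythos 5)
Your proposal is correct and is essentially identical to the paper's own proof: the same pigeonhole on the $\Abs{\Sigma}^w+1$ windows at positions $0,\ldots,\Abs{\Sigma}^w$, followed by applying window-determination to $x$ and its shift $\sigma^{T}x$. No issues.
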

\begin{proof}
Fix $x \in X$. Since there are at most $\Abs{\Sigma}^w$ different window-contents of length $w$, it
follows that there are two indices $i, j \in \Set{0,1,\ldots,\Abs{\Sigma}^w}$, $i<j$, such that
$$
x_i = x_j, x_{i+1} = x_{j+1}, \ldots, x_{i+w-1} = x_{j+w-1}.
$$
Writing $Tx$ for the left shift of $x \in X$ (i.e. $(Tx)_n = x_{n+1}$)
we have that $x$ and $T^{j-i}x$ agree at
the window $i, i+1, \ldots, i+w-1$. By assumption then $x = T^{j-i}x$, which is another way
of saying that the sequence $x$ has period $j-i \le \Abs{\Sigma}^w$.
\end{proof}

\subsection{Symbolic sequences with spectral gaps}
\label{sec:spectral-gaps}

Suppose $\Lambda \subseteq \RR$ is a spectrum of the bounded set $\Omega \subseteq \RR$ of measure 1.
Write $\delta_\Lambda = \sum_{\lambda\in\Lambda}\delta_{\lambda}$,
where $\delta_\lambda$ is a unit point mass at point $\lambda$.
It is well known
(see, for instance, \cite{kolountzakis2004milano})
that the Fourier Transform of the tempered distribution $\delta_\Lambda$ is supported
by 0 plus the zeros of the function
$$
\left(\Abs{\ft{\chi_\Omega}}^2\right)^\wedge = \chi_\Omega*\chi_{-\Omega}
$$
which is a continuous function with value 1 at the origin.
Therefore there is an interval $(0, a)$, with $a = a(\Omega) > 0$, such that
$\delta_\Lambda$ has a spectral gap:
\beql{spectral-gap}
\supp \ft{\delta_\Lambda} \cap (0, a) = \emptyset.
\eeq

With $\Sigma = \Sigma(\Omega)$ defined by \eqref{sigma}
let $X \subseteq \Sigma^\ZZ$ consist of all sequences which correspond to sets $\Lambda$
with gaps from $\Sigma$ such that \eqref{spectral-gap} holds.
The set $X$ is obviously shift-invariant
as shifting a sequence in $X$ corresponds to translation of the set $\Lambda$ and translation
will not affect the support of $\ft{\delta_\Lambda}$.
\begin{lemma}\label{lm:closed}
The set $X$ is closed in $\Sigma^\ZZ$.
\end{lemma}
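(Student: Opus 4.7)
The plan is to lift product-topology convergence of the symbolic sequences to convergence of their counting measures as tempered distributions, and then use that the condition ``$\supp \ft{\mu} \cap (0,a) = \emptyset$'' is a closed condition on $\mu \in \mathcal{S}'(\RR)$.

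First I would take a sequence $\Lambda^{(n)} \in X$ with $\Lambda^{(n)} \to \Lambda$ in $\Sigma^\ZZ$, and translate this back into a geometric statement about point sets in $\RR$. Since every element of $X$ is normalized to contain $0$, convergence of the gap-sequences on a window of symbols around the index $0$ is the same as identity of the corresponding point sets on a long interval: for every $M > 0$ there is $n_0$ such that $\Lambda^{(n)} \cap [-M, M] = \Lambda \cap [-M, M]$ for all $n \ge n_0$. Moreover, the uniform lower bound $\delta = \min \Sigma > 0$ on consecutive gaps gives, uniformly in $n$, the crude density bound that any interval of length $L$ meets $\Lambda^{(n)}$ (or $\Lambda$) in at most $L/\delta + 1$ points.

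The main step is to show $\delta_{\Lambda^{(n)}} \to \delta_\Lambda$ in $\mathcal{S}'(\RR)$. For a Schwartz function $f$, I would split $\inner{\delta_{\Lambda^{(n)}} - \delta_\Lambda}{f}$ into the contribution from points in $[-M, M]$, which vanishes as soon as $n \ge n_0(M)$, plus two tail contributions. Combining the rapid decay of $f$ with the uniform density bound, the tails are dominated by a quantity of the form $C_N \sum_{k \ge M/\Delta} (1+k\delta)^{-N}$ uniformly in $n$, and can be made arbitrarily small by first choosing $M$ large and then $n$ large. Hence $\delta_{\Lambda^{(n)}} \to \delta_\Lambda$ in $\mathcal{S}'(\RR)$, and therefore also $\ft{\delta_{\Lambda^{(n)}}} \to \ft{\delta_\Lambda}$ by continuity of the Fourier transform on $\mathcal{S}'(\RR)$.

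Finally, for every test function $\phi \in C_c^\infty((0,a))$ the hypothesis $\Lambda^{(n)} \in X$ gives $\inner{\ft{\delta_{\Lambda^{(n)}}}}{\phi} = 0$, so passing to the limit yields $\inner{\ft{\delta_\Lambda}}{\phi} = 0$. As $\phi$ was arbitrary, this says $\supp \ft{\delta_\Lambda} \cap (0, a) = \emptyset$, i.e.\ $\Lambda \in X$. The only real work is the tail estimate that produces the distributional convergence; the uniform control $\Sigma \subseteq [\delta, \Delta]$ on the gaps makes this routine, and everything else follows softly from Fourier duality and the definition of the support of a distribution.
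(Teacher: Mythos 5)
Your proposal is correct and follows essentially the same route as the paper: both arguments reduce the claim to interchanging a limit with the sum $\sum_{\lambda}\ft{\phi}(\lambda)$, using agreement of $\Lambda^{(n)}$ and $\Lambda$ on a large interval $[-R,R]$ together with a tail estimate coming from the rapid decay of the Schwartz function and the uniform minimum gap $\delta=\min\Sigma$. The only cosmetic difference is that you package the estimate as full convergence $\delta_{\Lambda^{(n)}}\to\delta_\Lambda$ in $\mathcal{S}'(\RR)$ and then invoke continuity of the Fourier transform, whereas the paper tests $\ft{\delta_\Lambda}$ directly against a single $\phi$ supported in $(0,a)$.
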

\begin{proof}
Suppose $\Lambda^n \in X$ and $\Lambda^n \to \Lambda\in\Sigma^\ZZ$ and that $\phi \in C^\infty(0, a)$.
It is enough to show that $\ft{\delta_\Lambda}(\phi) = 0$ as this is what it means for $\ft{\delta_\Lambda}$
to have no support in $(0, a)$ and therefore $\Lambda \in X$.
By definition of the Fourier Transform
$$
\ft{\delta_\Lambda}(\phi) = \delta_\Lambda(\ft{\phi}) = \sum_{\lambda\in\Lambda} \ft{\phi}(\lambda)
 =^* \lim_{n\to\infty} \sum_{\lambda\in\Lambda^n} \ft{\phi}(\lambda) = \lim_{n\to\infty} \delta_{\Lambda^n}(\ft{\phi}) = \lim_{n\to\infty} \ft{\delta_{\Lambda^n}}(\phi) = 0.
$$
The justification for the equality $=^*$ above is very easy given (a) the rapid decay of $\ft{\phi}$, and, (b)
the fact that all $\Lambda^n$ have the same positive minimum gap.

Indeed, due to (a) and (b) we can find for any $\epsilon>0$ an $R>0$ such that
$$
\Abs{\sum_{\lambda\in L\ \&\ \Abs{\lambda}>R} \ft{\phi}(\lambda)} < \epsilon
  \ \ \mbox{for $L=\Lambda$ or $L=\Lambda^n$},
$$
and also an $n_0$ such that $\Lambda^n \cap [-R, R] = \Lambda \cap [-R, R]$ for $n\ge n_0$.
It follows that for $n\ge n_0$ we have
$$
\Abs{\ft{\delta_\Lambda}(\phi)} =
\Abs{\ft{\delta_\Lambda}(\phi) - \ft{\delta_{\Lambda^n}}(\phi)} =
\Abs{\sum_{\lambda\in \Lambda\ \&\ \Abs{\lambda}>R} \ft{\phi}(\lambda) - \sum_{\lambda\in \Lambda^n\ \&\ \Abs{\lambda}>R} \ft{\phi}(\lambda)} \le 2\epsilon.
$$
This implies that $\ft{\delta_\Lambda}(\phi)=0$ as we had to show.
\end{proof}

\begin{theorem}\label{th:by-half-lines}
The sequences in $X$ are determined by both left half-lines and right half-lines.
\end{theorem}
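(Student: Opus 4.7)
The plan is to translate the half-line determination into a uniqueness claim for signed measures with a spectral gap supported on a half-line, and then prove that claim by a Schwarz-reflection argument for a holomorphic generating function on a half-plane. I describe the left half-line case; the right half-line case is symmetric (working in the upper half-plane instead).

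Let $\Lambda^1,\Lambda^2\in X$ agree on all indices $\le n$. Via the bijection in \S\ref{sec:symbols}, this translates to $\Lambda^1\cap(-\infty,M]=\Lambda^2\cap(-\infty,M]$ for some $M\in\RR$; by shift-invariance of $X$ I may take $M=0$. Set $A_j=\Lambda^j\cap(0,\infty)$, and consider the tempered signed measure $\mu=\delta_{A_1}-\delta_{A_2}=\sum_\lambda c_\lambda\delta_\lambda$ with $c_\lambda\in\{-1,0,+1\}$. Since both $\Lambda^j$ satisfy \eqref{spectral-gap}, the distribution $\ft{\mu}=\ft{\delta_{\Lambda^1}}-\ft{\delta_{\Lambda^2}}$ vanishes on $(0,a)$ as well. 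Moreover, $\mu$ is supported in $(0,\infty)$, and the nonzero-coefficient points have gaps bounded below by a positive constant, so their counting function grows at most linearly. The goal is to conclude $\mu=0$, that is $A_1=A_2$.

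I then form the Dirichlet-type generating function
$$
F(z)\ =\ \sum_\lambda c_\lambda\,e^{-2\pi i\lambda z},\qquad \mathrm{Im}\,z<0.
$$
For $\lambda>0$ and $z=x-iy$ with $y>0$, the factor $e^{-2\pi\lambda y}$ produces absolute, locally uniform convergence, so $F$ is holomorphic in the lower half-plane. The linear density bound yields $|F(x-iy)|=O(1/y)$ as $y\to 0^+$, so the distributional boundary value of $F$ exists and equals $\ft{\mu}$. Because $\ft{\mu}$ vanishes on $(0,a)$, the Schwarz reflection principle for holomorphic functions of tempered growth with zero distributional boundary values on an interval (a special case of the edge-of-the-wedge theorem) lets $F$ extend holomorphically across $(0,a)$ with value $0$ there. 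The identity theorem then forces $F\equiv 0$ throughout the lower half-plane.

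To finish, let $\lambda_0$ be the smallest $\lambda$ with $c_\lambda\neq 0$, and compute
$$
0\ =\ F(-iy)\,e^{-2\pi\lambda_0 y}\ =\ c_{\lambda_0}+\sum_{\lambda>\lambda_0}c_\lambda\,e^{-2\pi(\lambda-\lambda_0)y},\qquad y>0.
$$
Sending $y\to\infty$, the tail decays to $0$ by dominated convergence, forcing $c_{\lambda_0}=0$, a contradiction. Hence $A_1=A_2$ and $\Lambda^1=\Lambda^2$. The main obstacle I anticipate is the reflection step: since $F$ is unbounded near the real axis, no direct $H^p$ argument applies, and one must invoke the reflection principle in its distributional form, which essentially exploits the tempered $O(1/y)$ growth of $F$.
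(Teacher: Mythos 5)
Your proof is correct, but it takes a genuinely different route from the paper's. The paper also reduces to showing that the tempered measure $\mu=\delta_{\Lambda^1}-\delta_{\Lambda^2}$, supported on a right half-line and with $\ft{\mu}$ vanishing on $(0,a)$, must be zero; but it then \emph{localizes}: it multiplies $\mu$ by $\ft{\psi}$ for $\psi\in C^\infty_c(-a/10,a/10)$ to obtain a totally bounded measure $\nu$ still supported on $[0,+\infty)$ and still having a spectral gap (now only on $(a/10,9a/10)$, since $\ft{\nu}=\psi*\ft{\mu}$), and invokes the classical F.~and~M.~Riesz theorem to conclude that $\ft{\nu}$ cannot vanish on an interval unless $\nu=0$; letting $\psi$ vary gives $\mu=0$. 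You instead skip the localization and work directly with the Laplace-type extension $F(z)=\sum c_\lambda e^{-2\pi i\lambda z}$, holomorphic in the lower half-plane with $O(1/|{\rm Im}\,z|)$ growth, whose distributional boundary value is $\ft{\mu}$; the vanishing of that boundary value on $(0,a)$ plus the distributional reflection principle (one-dimensional edge-of-the-wedge) forces $F\equiv 0$, and peeling off the leading exponential kills the coefficients. What each buys: the paper's route needs only the F.~and~M.~Riesz theorem for finite measures, at the price of the smoothing trick (and of checking that $\ft{\psi}\cdot\mu$ is totally bounded, which uses the rapid decay of $\ft{\psi}$ against the linear growth of $\Lambda$'s counting function); your route is more self-contained in that it never produces a finite measure, but it leans on the tempered-boundary-value/reflection machinery, which is essentially the complex-analytic engine underneath F.~and~M.~Riesz anyway. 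Two cosmetic points: the factor in your last display should be $e^{+2\pi\lambda_0 y}$, not $e^{-2\pi\lambda_0 y}$; and the support of $\mu$ need not have gaps bounded below (a point of $A_1\setminus A_2$ can be arbitrarily close to one of $A_2\setminus A_1$) --- but since it is a union of two $\delta$-separated sets its counting function still grows linearly, which is all your convergence and $O(1/y)$ estimates actually require.
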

\begin{proof}
Suppose that $X$ is not determined by left half-lines (the argument is similar for right half-lines).
Then there are distinct $\Lambda^1, \Lambda^2 \in X$ such that
$\Lambda^1_i = \Lambda^2_i$ for all negative integers $i$.
Both $\delta_{\Lambda^1}$ and $\delta_{\Lambda^2}$ have a spectral gap at $(0,a)$ and therefore
so does their difference
$$
\mu = \delta_{\Lambda^1} - \delta_{\Lambda^2}.
$$
Notice that $\mu$ is supported in the half-line $[0, +\infty)$.
Suppose $\psi \in C^\infty(-a/10, a/10)$. It follows from the rapid decay of $\ft{\psi}$
that the measure
$$
\nu = \ft{\psi}\cdot \mu
$$
is totally bounded and
still has a spectral gap at the interval $(a/10, 9a/10)$.
But the measure $\nu$ is also supported in the half-line $[0, +\infty)$
and by the F. and M. Riesz Theorem \cite{havin1994uncertainty}
its Fourier Transform is mutually absolutely continuous with
respect to the Lebesgue measure on the line. But this is incompatible with the vanishing
of $\ft{\nu}$ in some interval. Therefore $\nu$ must be identically $0$ and,
since $\psi \in C^\infty(-a/10, a/10)$, is otherwise arbitrary, it follows that $\mu\equiv 0 $, or
$\Lambda^1 = \Lambda^2$, a contradiction. It follows that $X$ is indeed determined by left half-lines.
\end{proof}

\subsection{Conclusion of the argument}
\label{sec:conclusion}

By Lemma \ref{lm:closed} and Theorem \ref{th:by-half-lines} the set $X$ defined above,
right after \eqref{spectral-gap}, given $\Omega$
is a closed shift-invariant subset of $\Sigma^\ZZ$ and its elements are determined by half-lines.
By Theorem \ref{th:diag} there exists a finite number $w$
such that the elements of $X$ are determined by their values at any window of width
$w$. By Theorem \ref{th:periodic-elements} all elements of $X$
are therefore periodic sequences. Since all spectra of $\Omega$ can also
be viewed as elements of $X$,
the periodicity of any spectrum of $\Omega$ follows from the periodicity of the sequence
of its successive differences.

The fact that any period of $\Lambda$ is a positive integer is a consequence of
the fact that $\Lambda$ has density 1: if $T$ is a period of $\Lambda$ this implies that
there are exactly $T$ elements of $\Lambda$ in each interval $[x, x+T)$ hence $T$
is an integer.

\ 

\noindent{\bf Acknowledgment:} We are grateful to Dorin Dutkay for pointing out an error
in our initial proof of Lemma \ref{lm:bounded-gaps}.

\bibliographystyle{abbrv}
\bibliography{spectral-sets}

\end{document}